\theoremstyle{plain}
\newtheorem{theorem}{Theorem}[section]
\newtheorem{lemma}[theorem]{Lemma}
\newtheorem{corollary}[theorem]{Corollary}
\newtheorem{proposition}[theorem]{Proposition}
\theoremstyle{remark}
\newcommand{\Rmnum}[1]{\expandafter\@slowromancap\romannumeral #1@}
\def\ri{{\rm i}}
\def\bQ{\mathbb Q}
\def\bN{\mathbb N}
\def\bZ{\mathbb Z}
\def\bC{{\mathbb C}}
\def\cH{{\mathcal H}}
\numberwithin{equation}{section}
\begin{document}

\title[Divisor functions and the half Appell sums]{Restricted divisor functions and half Appell sums \\ in higher-level}
\author{Meng-Juan Tian and Nian Hong Zhou}

\address{M.J. Tian \& N. H. Zhou: School of Mathematics and Statistics, The Center for Applied Mathematics of Guangxi, Guangxi Normal University, Guilin 541004, Guangxi, PR China}
\email{mengjuantian@outlook.com; nianhongzhou@outlook.com}%
\thanks{Nian Hong Zhou is the corresponding author}
\thanks{This paper was partially supported by the National Natural Science Foundation of China (No. 12301423).}%
\subjclass{Primary 11P82;  Secondary 11N37}%
\keywords{coefficients of $q$-series, half Appell sums, divisor functions, spt-cranks}%

\begin{abstract}
We examine the value distributions of coefficients in certain $q$-series related to half Appell sums in higher-level and the first moment of the Garvan's $k$-rank of partitions. We prove that these coefficients equal certain restricted divisor functions and can take any nonnegative integer value infinitely many times. As applications, we confirm a conjecture of Xiong on the coefficients of a half Lerch sum and a conjecture of Garvan and Jennings-Shaffer on the nonnegativity of spt-crank-type partitions.
\end{abstract}
\maketitle

\section{Introduction}\label{sec1}
The level $1$ \emph{Appell function} or \emph{Lerch sum} was studied by Appell (1884), Lerch (1892) and others, which is defined formally as the following
\begin{equation}\label{eqm0}
A(z;\tau)=w^{1/2}\sum_{n \in \bZ}\frac{(-1)^{n}q^{\binom{n+1}{2}}}{1-wq^n},
\end{equation}
where $\Im(\tau)>0$, $z\in\bC$, $w=e^{2\pi\ri z}$ and $q=e^{2\pi\ri \tau}$. This function plays an important role in the theory of mock modular forms as shown by Zwegers in his remarkable thesis. In the special case that $z=0$, a half-sum of \eqref{eqm0} reads as
\begin{align}\label{eqm1}
\cH(q):=\sum_{n\ge 0} h(n)q^n:=\sum_{n\ge 1}\frac{(-1)^{n+1} q^{\binom{n+1}{2}}}{1-q^n}.
\end{align}
This series was considered and studied by Andrews-Chan-Kim-Osburn \cite{MR3505316} in their work on the first positive rank and crank moments for overpartitions. Moreover, they \cite[Lemma 2.2]{MR3505316} established the following identity
\begin{align}\label{eqm2}
\cH(q)=\sum_{\substack{j\ge 1\\ 0\le r<j}}q^{j(j+r)}(1+q^j).
\end{align}
From which we immediately see that $h(n)$ are always nonnegative.

\medskip

Motivated by the work of Andrews-Dyson-Hickerson \cite{MR928489} on the value distributions of the coefficients $S(n)$ of the $\sigma$ function in Ramanujan's Lost Notebook V (see \cite{MR849849}):
\begin{align*}
\sigma(q):=\sum_{n\ge 0}S(n)q^n&=\sum_{n\ge 0}\frac{q^{\binom{n+1}{2}}}{(-q;q)_n}=\sum_{\substack{n\ge 0\\ |j|\le n}}(-1)^{n+j}q^{n(3n+1)/2-j^2}(1-q^{2n+1}),
\end{align*}
where $(a;q)_{n}:=\prod_{0\le j<n}(1-aq^j)$ for any $a\in\bC$, $|q|<1$ and $n\in\bN_0\cup\{\infty\}$, 
Xiong \cite{MR3704372} and Chen \cite{MR3922597} studied the value distribution of $h(n)$ and established many asymptotic results for the counting function
$$S_i(x):=\#\{1\le n\le x: h(n)=i\},$$
that is, the number of $n \le x$ such that $h(n) = i$, by using the identity \eqref{eqm2}. For example, Chen \cite[Theorem 1.2]{MR3922597} proved that almost all the coefficients $h(n)$ vanish. In particular, he proved that
\begin{align}\label{caf}
S_0(x)=x+O\left(\frac{x}{(\log x)^{\delta}(\log\log x)^{3/2}}\right),
\end{align}
for all $x>3$, where $\delta=1-\frac{1+\log\log 2}{\log 2}=0.086071\cdots$. Xiong \cite[Conjecture 1]{MR3704372} conjectured that
\begin{equation}\label{eqcx}
\limsup_{n\to \infty} h(n)=+\infty.
\end{equation}
This conjecture analogs Andrews's conjecture (see \cite{MR1540966}): $\limsup_{n\to\infty}|S(n)|=+\infty$. Andrews's conjecture was soon proved by himself, Dyson and Hickerson \cite{MR928489} by related $\sigma(q)$ to the arithmetic of quadratic field $\bQ(\sqrt{6})$.

\medskip

Our original motivation for this paper is to prove the above conjecture of Xiong \eqref{eqcx}. We in fact investigate the value distributions of the coefficients of a more general \emph{half Appell sums in higher-level} $\cH_{k,m}(q)$. Define for any odd integer $k\ge 1$ and any integer $m\ge 0$ that
\begin{align}\label{eqm3}
\cH_{m,k}(q):=\sum_{n\ge 0}h_{m,k}(n)q^n=\sum_{n\ge 1}\frac{(-1)^{n-1} q^{k\binom{n}{2}+mn}}{1-q^n}.
\end{align}
Clearly, $h(n)=h_{1,1}(n)$. The sum \eqref{eqm3} for $\cH_{m,k}(q)$ is almost a half sums of level $k$ \emph{Appell function} when $m=k$. Recall that  the level $k$ Appell functions is defined by
$$
A_k(z;\tau)=w^{k/2}\sum_{n \in \bZ}\frac{(-1)^{kn}q^{k\binom{n+1}{2}}}{1-wq^n},
$$
see Zwegers's \cite{MR3048661} for the more details on higher level Appell functions.

\medskip

The function $\cH_{m,2k-1}(q)$ also related to the first moments of Garvan $k$-ranks for partitions.
Define for any integer $m\in\bZ$ and $k\ge 1$ that
\begin{align*}
\sum_{n\ge 0}N_k(m,n)q^n=\frac{1}{(q;q)_\infty}\sum_{n\ge 1}(-1)^{n-1}q^{n((2k-1)n-1)/2+|m|n}(1-q^n).
\end{align*}
It is well-known that $N_1(m,n)=M(m,n)$ is the Andrews-Garvan-Dyson's
crank function and $N_2(m,n)=N(m,n)$ is Dyson's rank function. Garvan \cite{MR1291125} proved that for any integer $k\ge 2$, $N_k(m, n)$ is the number of partitions of $n$ into at least $(k-1)$ successive Durfee squares with \emph{$k$-rank} equal to $m$. For the details of the combinatorial interpretation of $N_k(m,n)$, see \cite[Theorem (1.12)]{MR1291125}. Define that
$$N_{k}^\dag(m, n)=\sum_{\ell \ge m+(1-k)}\ell N_k(\ell,n),$$
the first partial moments of Garvan $k$-rank for partitions. Clearly, when $k=1$ and $k=2$,
$$N_{k}^\dag\left(k, n\right)=\sum_{\ell \ge 1}\ell N_k(\ell,n)$$
are the first positive crank moment and rank moment for partitions, respectively.  See Andrews-Chan-Kim \cite{MR2971698} for details.  Moreover, for any $m\ge k-1$, by elementary arguments we obtain
\begin{align*}
\sum_{n\ge 0}N_{k}^\dag(m, n)q^n-\frac{m-k}{(q;q)_\infty}\sum_{n\ge 1}(-1)^{n-1}q^{(2k-1)\binom{n}{2}+mn}=\frac{1}{(q;q)_\infty}\sum_{n\ge 1}\frac{(-1)^{n-1}q^{(2k-1)\binom{n}{2}+mn}}{1-q^n}.
\end{align*}

\medskip

The first result of this paper is the following restricted divisor sum formula.
\begin{theorem}\label{main}For any integer $m\ge 0$ and any odd integer $k\ge 1$ we have
\begin{align*}
\cH_{m,k}(q)=\sum_{\ell\ge 1}\sum_{\substack{j\in\bZ\\ m+k(\ell-1)/2\le j<2m+k(2\ell-1)}}q^{j\ell}.
\end{align*}
Moreover,
\begin{equation*}
h_{m,k}(n)=\sum_{\substack{d\mid n\\  u_{m,k}(n)  < 2kd\le 2u_{m,k}(n)}}1,
\end{equation*}
where $u_{m,k}(n)=\sqrt{2kn+(m-k/2)^2}-(m-k/2)$. In particular, $h_{m,k}(n)\ge 0$ for all $n$.
\end{theorem}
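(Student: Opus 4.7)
I would first establish the closed-form series identity for $\cH_{m,k}(q)$ by expanding each summand as a geometric series, and then extract the divisor-counting expression for $h_{m,k}(n)$ via elementary quadratic analysis. Concretely, writing $\cH_{m,k}(q) = \sum_{n\ge 1}(-1)^{n-1}\sum_{i\ge 0}q^{k\binom{n}{2}+mn+ni}$ and splitting by the parity of $n$: for odd $n$ the exponent factors as $nj$ with $j=m+k(n-1)/2+i$ an integer, contributing $\sum_{\text{odd }n\ge 1}\sum_{j\ge m+k(n-1)/2}q^{nj}$. For even $n=2\ell$ the exponent equals $\ell\cdot\bigl(2m+k(2\ell-1)+2i\bigr)$, and since $k$ is odd the quantity $2m+k(2\ell-1)$ is odd, so after reindexing the even contributions collect to $-\sum_{\ell\ge 1}\sum_{\substack{j\ \text{odd}\\ j\ge 2m+k(2\ell-1)}}q^{\ell j}$. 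I would then expand the claimed right hand side $R$ by the parity of $\ell$ (the lower bound $m+k(\ell-1)/2$ is a half-integer precisely when $\ell$ is even, in which case integer $j$'s start at $m+(k(\ell-1)+1)/2$) and compute $\cH_{m,k}(q)-R$. After the obvious cancellations the remainder is
\[
\sum_{\ell\ge 1}\sum_{\substack{j\ \text{even}\\ j\ge 2m+k(2\ell-1)}}q^{\ell j}\;-\;\sum_{\ell\ \text{even}}\sum_{j\ge m+(k(\ell-1)+1)/2}q^{\ell j},
\]
which vanishes: substituting $j=2j'$ in the first sum turns it into $\sum_{\ell\ge 1}\sum_{j'\ge m+(k(2\ell-1)+1)/2}q^{2\ell j'}$, and relabelling $2\ell$ as a new even index reproduces the second sum exactly.

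With the series identity in hand, extracting $h_{m,k}(N)$ is straightforward: each pair $(\ell,j)$ contributing to $q^N$ must satisfy $\ell\mid N$, $j=N/\ell$, and (multiplying the inner range through by $\ell$) $m\ell+k\binom{\ell}{2}\le N<2m\ell+k\binom{2\ell}{2}$. The lower inequality is the quadratic $k\ell^2+(2m-k)\ell-2N\le 0$, whose positive root equals $u_{m,k}(N)/k$ by completing the square using the very definition of $u_{m,k}$; replacing $\ell$ by $2\ell$ in the same quadratic shows the strict upper inequality is equivalent to $\ell>u_{m,k}(N)/(2k)$. Together these give $u_{m,k}(N)<2k\ell\le 2u_{m,k}(N)$, matching the theorem, and nonnegativity is then obvious from the divisor-count form.

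The main obstacle I anticipate is the parity and ceiling bookkeeping in the first step: four cases (odd versus even $\ell$ combined with odd versus even $j$) all appear simultaneously, and the integer-versus-half-integer behaviour of $m+k(\ell-1)/2$ forces the off-by-one at $\lceil m+k(\ell-1)/2\rceil$ to be tracked precisely on both sides of $\cH_{m,k}(q)-R$. Everything else reduces to routine algebra; identifying the boundary root with $u_{m,k}(N)$ is just completion of the square from the definition.
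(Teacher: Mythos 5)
Your proof is correct and takes essentially the same route as the paper: geometric-series expansion, a parity split of the outer summation index, and cancellation of the two even-index sums via the substitution $j=2j'$ together with relabelling $2\ell$ as an even index, followed by the same quadratic-root analysis (with $2u_{m,k}(n)=\sqrt{8kn+(2m-k)^2}-(2m-k)$) to obtain the divisor condition. The only difference is organizational --- you verify that $\cH_{m,k}(q)$ minus the claimed right-hand side vanishes, whereas the paper chains two displayed identities to reach the same cancellation --- so the mathematical content is identical.
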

Let $m\ge 0$ be any integer and $k\ge 1$ be any odd integer. Define for any $x>1$ and any integer $\ell\ge 0$ that
$$S_{m,k}(\ell; x):=\#\{0\le n\le x: h_{m,k}(n)=\ell\}.$$
Based on Theorem \ref{main}, we prove the following theorem.
\begin{theorem}\label{main0}For all $x\ge 2$, any integers $m,\ell\ge 0$ and any odd integer $k\ge 1$, we have
\begin{align*}
S_{m,k}(\ell;x)\gg \begin{cases}
(\log x)^{-1}x^{\frac{1}{\ell+1}}\;\;\text{for}\;\; \ell\in\{0,1\},\\
 (\log x)^{-2}x^{\frac{1}{\ell-1}}\;\;\text{for}\;\; \ell\ge 2,
 \end{cases}
\end{align*}
where the implied constant depends only on $m$, $k$ and $\ell$. In other words, $h_{m,k}(n)$ can take any nonnegative integer infinity many times.
\end{theorem}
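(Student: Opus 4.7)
The plan is to combine Theorem \ref{main} with explicit parametric families of $n$ whose divisor structure realizes any prescribed value of $\ell$. By Theorem \ref{main}, $h_{m,k}(n)$ equals the number of divisors $d\mid n$ lying in $(u_{m,k}(n)/(2k), u_{m,k}(n)/k]$; for $n$ large this interval equals $(\sqrt{n/(2k)}, \sqrt{2n/k}]$ up to an $O(1)$ shift, so equivalently $h_{m,k}(n)$ counts complementary factorisations $n=de$ with $e/d\in[k/2, 2k)$. The ratio of this interval being the fixed constant $4$, my strategy is to construct $n$ with exactly $\ell$ such factorisations.

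For $\ell=0$ I take $n=p$ prime: the only nontrivial pair $(1,p)$ has ratio $p\ge 2k$ (for $p$ large), outside $[k/2,2k)$, so $h_{m,k}(p)=0$, and the prime number theorem yields $\gg x/\log x$ admissible $n\le x$. For $\ell=1$ I take $n=p^2$ when $k=1$ (the single pair $(p,p)$ has ratio $1\in[1/2,2)$) and $n=pq$ for primes $p<q$ with $q/p\in[k/2,2k)$ when $k\ge 3$ (only the pair $(p,q)$ contributes, since $p/q<1\le k/2$ excludes $(q,p)$). A standard PNT count gives $\gg \sqrt{x}/\log x$ in either subcase.

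For $\ell\ge 2$, set $c_k=1$ if $k=1$ and $c_k=k$ if $k\ge 3$, and consider $n=c_k(pq)^{\ell-1}$ for primes $p<q$ coprime to $c_k$ with $q/p\in(1, 2^{1/(\ell-1)})$. Writing a divisor $d=\delta p^i q^j$ with $\delta\mid c_k$ and $0\le i,j\le\ell-1$, the condition $n/d^2\in[k/2,2k)$ becomes $(c_k/\delta^2)p^{\ell-1-2i}q^{\ell-1-2j}\in[k/2,2k)$. As $p,q\to\infty$ the left side stays bounded only when $i+j=\ell-1$; among such terms, $\delta=1$ yields $c_k(p/q)^{\ell-1-2i}\in[k/2,2k)$, equivalently $(p/q)^{\ell-1-2i}\in[1/2,2)$, which holds for every $i\in\{0,1,\dots,\ell-1\}$ once $q/p<2^{1/(\ell-1)}$, producing exactly the $\ell$ in-range divisors $p^iq^{\ell-1-i}$. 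When $k\ge 3$, any $\delta\ge 2$ gives limiting value $c_k/\delta^2\le k/4$, so $n/d^2<k/2$ and the contribution is excluded. Therefore $h_{m,k}(n)=\ell$. The count follows by applying PNT on short intervals: with $T\asymp x^{1/(2(\ell-1))}$, the number of valid $n\le x$ is at least $\sum_{p\le T}\#\{q\text{ prime}\in(p,2^{1/(\ell-1)}p]\}\asymp \sum_{p\le T}p/\log p\asymp x^{1/(\ell-1)}/(\log x)^2$.

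The main obstacle is the divisor case-analysis for $\ell\ge 2$, in particular ruling out accidental in-range divisors arising from unbalanced $(i,j)$ with $i+j\ne\ell-1$ or from proper factors $\delta>1$ of $c_k$. The decisive inputs are the elementary observation that $p^aq^b$ remains bounded under $p,q\to\infty$ (with $q/p$ fixed) only along the line $a+b=0$, together with the design choice $c_k\ge k/2$ that pushes every $\delta\ge 2$ contribution below the admissible range. With these two inputs in hand the reduction via Theorem \ref{main} and the PNT-based prime-pair counting are standard.
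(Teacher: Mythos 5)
Your proposal is correct in substance and follows essentially the same route as the paper: reduce via Theorem \ref{main} to counting divisors $d\mid n$ with $n/d^2$ in an interval of bounded ratio, exhibit explicit parametric families realizing each value $\ell$, and count them with the prime number theorem. Indeed, your family for $\ell\ge 2$, namely $n=c_k(pq)^{\ell-1}$, coincides with the paper's $n=kp^tq^t$ with $t=\ell-1$ (the paper keeps the factor $k$ also when $k=1$), and your $\ell\in\{0,1\}$ families ($n=p$, and $n=p^2$ or $n=pq$) are minor variants of the paper's $n=kp$ and $n=kp^2$; the PNT counting at the end is identical, including the exponents $x^{1/(\ell+1)}$ and $x^{1/(\ell-1)}$.

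The one point where you are looser than the paper is the handling of what you call the ``$O(1)$ shift''. The exact condition encoded in Theorem \ref{main} is $n/d^2\in\left[k/2+(m-k/2)/d,\,2k+2(m-k/2)/d\right)$, not $[k/2,2k)$; since the shift is of order $n^{-1/2}$, your claims of exact equality $h_{m,k}(n)=\ell$ can literally fail for pairs whose ratio lands within $O(n^{-1/2})$ of an endpoint of your window. For instance, with $k\ge 3$, $m=0$ and your $\ell=1$ window $q/p\in[k/2,2k)$, primes with $2kp-k\le q<2kp$ satisfy your condition yet give $h_{m,k}(pq)=0$; similarly, for $\ell\ge 2$ the divisor $q^{\ell-1}$ yields $n/d^2=k(p/q)^{\ell-1}$, which approaches $k/2$ as $q/p\to 2^{1/(\ell-1)}$ and can then fall on the wrong side of the shifted endpoint. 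This is not a structural flaw: shrinking every ratio window so that all relevant values of $n/d^2$ stay uniformly bounded away from $k/2$ and $2k$ --- exactly what the paper does via the normalization bound $|w_{m,k}-1|<0.04$ in \eqref{eq30}, the hypothesis $(q/p)^{t/2}\le 0.96\sqrt{2}$ in Lemma \ref{lem24}, and counting only pairs with $1<(q/p)^{t/2}\le 1.3$ --- repairs the argument without changing the order of magnitude of any count, so your stated lower bounds survive intact.
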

Consequently, we obtain the following corollary.
\begin{corollary}
For any integer $m\ge 0$ and any odd integer $k\ge 1$, we have
$$\limsup_{n\to \infty}h_{m,k}(n)=+\infty,$$
In particular, the conjecture \eqref{eqcx} of Xiong is true.
\end{corollary}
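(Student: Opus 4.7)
The corollary is an immediate consequence of Theorem~\ref{main0}. The plan is simply to observe that Theorem~\ref{main0} guarantees, for every nonnegative integer $\ell$, that $S_{m,k}(\ell;x)\to\infty$ as $x\to\infty$; in particular $S_{m,k}(\ell;x)\ge 1$ for all sufficiently large $x$. Hence for every $\ell\ge 0$ there exists some (in fact infinitely many) $n\in\bN$ with $h_{m,k}(n)=\ell$.

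From this, for any fixed $L\ge 0$ we can find $n$ with $h_{m,k}(n)=L$, so
\[
\limsup_{n\to\infty}h_{m,k}(n)\ge L.
\]
Letting $L\to\infty$ gives $\limsup_{n\to\infty}h_{m,k}(n)=+\infty$, as required. Xiong's conjecture \eqref{eqcx} is then the special case $m=k=1$, since $h(n)=h_{1,1}(n)$ by construction.

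There is no real obstacle here: the entire content of the corollary is packaged inside Theorem~\ref{main0}, and the corollary merely extracts the qualitative statement from the quantitative lower bound on $S_{m,k}(\ell;x)$. (One might equivalently bypass the counting function and invoke only the qualitative part of Theorem~\ref{main0} at the end of its statement, namely that $h_{m,k}(n)$ attains every nonnegative integer infinitely often.)
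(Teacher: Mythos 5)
Your proposal is correct and matches the paper's approach exactly: the paper presents this corollary as an immediate consequence of Theorem~\ref{main0} (whose lower bounds on $S_{m,k}(\ell;x)$ show each value $\ell$ is attained infinitely often), which is precisely the deduction you spell out, together with the observation $h(n)=h_{1,1}(n)$ for Xiong's conjecture.
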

Based on Theorem \ref{main} and a work of Ford \cite[Corollary 2]{MR2434882} on the distribution of integers with a divisor in a given interval, we can further obtain the following asymptotic formula for $S_{m,k}(0; x)$, which generalizes Chen's asymptotic formula \eqref{caf}.
\begin{theorem}\label{corm}
Let $m\ge 0$ be any integer and $k\ge 1$ be any odd integer. For all $x\ge 3$, we have
$$S_{m,k}(0; x)=x+O\left((\log x)^{-\delta} (\log\log x)^{-3/2}x\right),$$
where the implied constant depends only on $m$ and $k$.
\end{theorem}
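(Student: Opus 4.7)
The plan is to translate the vanishing condition $h_{m,k}(n)=0$ from Theorem \ref{main} into a condition on the divisors of $n$, and then to apply Ford's theorem on the density of integers with a divisor in a prescribed interval, after a dyadic reduction.

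First I would use Theorem \ref{main} to observe that $h_{m,k}(n)=0$ if and only if $n$ has no divisor in the half-open interval $I_{n}:=\bigl(u_{m,k}(n)/(2k),\;u_{m,k}(n)/k\bigr]$, so that
\begin{equation*}
x-S_{m,k}(0;x)=\#\bigl\{n\le x: n\text{ has some divisor in }I_{n}\bigr\}+O(1).
\end{equation*}
From $u_{m,k}(n)=\sqrt{2kn+(m-k/2)^{2}}-(m-k/2)=\sqrt{2kn}+O_{m,k}(1)$, both endpoints of $I_{n}$ are of order $\sqrt{n/k}$ and their ratio equals exactly $2$.

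Next I would split the range $n\le x$ into dyadic blocks $n\in(N/2,N]$ with $N=2^{j}$ for $j=1,\ldots,\lceil\log_{2}x\rceil$. Because $u_{m,k}$ is monotone in $n$, for every $n$ in such a block $I_{n}$ is contained in the fixed enveloping interval
\begin{equation*}
J_{N}:=\left(\frac{1}{2}\sqrt{N/k}-C,\;\sqrt{2N/k}+C\right],
\end{equation*}
for some $C=C(m,k)>0$. The endpoint-ratio of $J_{N}$ is bounded (in fact it tends to $2\sqrt{2}$), and its lower endpoint tends to infinity with $N$. Ford's Corollary 2 in \cite{MR2434882} then gives the uniform upper bound
\begin{equation*}
\#\{n\le N: n\text{ has a divisor in }J_{N}\}\ll_{m,k}\frac{N}{(\log N)^{\delta}(\log\log N)^{3/2}},
\end{equation*}
after covering $J_{N}$ by a bounded number of dyadic subintervals and applying a union bound.

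Finally I would sum this estimate over the $O(\log x)$ dyadic blocks; the resulting series is geometric-like in $N$ (the logarithmic factors vary negligibly relative to the factor $N$), so it is dominated by its last term at $N\asymp x$. This yields
\begin{equation*}
x-S_{m,k}(0;x)\ll_{m,k}\frac{x}{(\log x)^{\delta}(\log\log x)^{3/2}},
\end{equation*}
as desired. The main technical point is the uniform replacement of the $n$-dependent interval $I_{n}$ by the fixed $J_{N}$ of bounded endpoint-ratio and divergent lower endpoint; once this reduction is set up, the rest is a direct invocation of \cite[Corollary 2]{MR2434882} combined with dyadic summation.
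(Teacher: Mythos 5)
Your proposal is correct and follows essentially the same route as the paper's proof: restrict to dyadic blocks, observe that for $n$ in a block the $n$-dependent divisor interval from Theorem \ref{main} sits inside a fixed interval of bounded endpoint ratio and length $\asymp\sqrt{N}$, apply Ford's \cite[Corollary 2]{MR2434882}, and sum the dyadic contributions (the paper handles the small-$N$ blocks by a trivial bound below $\sqrt{x}$, which is the same device as your ``dominated by the last term'' step). The only cosmetic differences are the envelope's endpoint ratio ($2\sqrt{2}$ versus the paper's $6$) and your optional covering of $J_N$ by dyadic subintervals, which is unnecessary since Ford's result applies directly to any fixed ratio $c>1$.
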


\medskip

Finally, applications of Theorem \ref{main} include the following inequalities for $h_{m,k}(n)$.
\begin{proposition}\label{pro1}For all integers $m, n \geq 0$, and all odd integers $k \geq 1$, we have
$$h_{m,k}(2n)\ge h_{m,k}(n).$$
\end{proposition}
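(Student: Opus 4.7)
The plan is to use Theorem \ref{main} to recast $h_{m,k}(n)$ as a count of divisors of $n$ satisfying a quadratic window condition, then build an explicit injection from the divisors counted by $h_{m,k}(n)$ into those counted by $h_{m,k}(2n)$.

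Set $f(\ell):=k\ell^{2}+(2m-k)\ell$ for $\ell\ge 1$. Writing $j=n/\ell$ in the first formula of Theorem \ref{main} and multiplying the window $m+k(\ell-1)/2\le j<2m+k(2\ell-1)$ by $2\ell$, one checks that
$$h_{m,k}(n)=\#\{\ell\ge 1:\,\ell\mid n,\ f(\ell)\le 2n<f(2\ell)\}.$$
Two elementary properties of $f$ drive the argument: $f$ is strictly increasing on $[1,\infty)$ (since its derivative $2k\ell+(2m-k)\ge k+2m>0$), and $f(2\ell)-2f(\ell)=2k\ell^{2}\ge 0$, which in turn gives $f(4\ell)\ge 2f(2\ell)$.

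Given $\ell$ counted by $h_{m,k}(n)$, doubling the defining inequality yields
$$f(\ell)\le 2f(\ell)\le 4n<2f(2\ell)\le f(4\ell),$$
so $4n$ lies in exactly one of the disjoint intervals $[f(\ell),f(2\ell))$ or $[f(2\ell),f(4\ell))$. In the first case $\ell$ itself is a valid divisor for $h_{m,k}(2n)$; in the second case $2\ell$ is (note both $\ell$ and $2\ell$ divide $2n$ since $\ell\mid n$). Define $\phi(\ell)\in\{\ell,2\ell\}$ accordingly.

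The main subtle point is injectivity of $\phi$. Comparing cases, the only potential collision is $\phi(\ell_{1})=\ell_{1}=2\ell_{2}=\phi(\ell_{2})$ where $\ell_{1}$ falls in the first case and $\ell_{2}$ in the second. But this requires both $\ell_{2}$ and $2\ell_{2}=\ell_{1}$ to be counted by $h_{m,k}(n)$, forcing $2n$ to lie simultaneously in the disjoint intervals $[f(\ell_{2}),f(2\ell_{2}))$ and $[f(2\ell_{2}),f(4\ell_{2}))$, a contradiction. Hence $\phi$ is an injection from the set counted by $h_{m,k}(n)$ into the set counted by $h_{m,k}(2n)$, proving the proposition.
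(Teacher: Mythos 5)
Your proof is correct, but it takes a genuinely different route from the paper's. Both arguments start from Theorem \ref{main}, but the paper (via its Proposition \ref{prom}) first reduces to odd $n$, writes every divisor of $2^{r}n$ as $2^{s}\ell$ with $\ell\mid n$ and $0\le s\le r$, and observes that the rescaled windows $(2^{-s}u_{m,k}(2^{r}n),\,2^{1-s}u_{m,k}(2^{r}n)]$ tile a single interval; comparing the resulting intervals for $2^{r+1}n$ and $2^{r}n$ (using monotonicity properties of $u_{m,k}$) expresses the difference $h_{m,k}(2^{r+1}n)-h_{m,k}(2^{r}n)$ as a count of divisors of $n$ in two additional intervals, hence manifestly nonnegative --- indeed an exact formula for the difference. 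You instead recast the window in polynomial form: your translation is correct, since multiplying $m+k(\ell-1)/2\le n/\ell<2m+k(2\ell-1)$ by $2\ell$ gives exactly $f(\ell)\le 2n<f(2\ell)$ with $f(\ell)=k\ell^{2}+(2m-k)\ell$, and you then build an explicit injection $\phi(\ell)\in\{\ell,2\ell\}$ according to whether $4n$ falls in $[f(\ell),f(2\ell))$ or $[f(2\ell),f(4\ell))$. Your injectivity analysis is also sound: a cross-case collision $\ell_{1}=2\ell_{2}$ would force $2n$ into both of the disjoint intervals $[f(\ell_{2}),f(2\ell_{2}))$ and $[f(2\ell_{2}),f(4\ell_{2}))$. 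What your route buys: it avoids the square-root quantity $u_{m,k}$ and its monotonicity inequalities entirely, and it needs no reduction to odd $n$. What the paper's route buys: a stronger conclusion (the precise difference formula of Proposition \ref{prom}) with no injectivity case analysis. One small point worth making explicit in your write-up: the step $f(\ell)\le 2f(\ell)$ uses $f(\ell)\ge 0$, which holds because $f(\ell)=\ell\left(k(\ell-1)+2m\right)\ge 0$ for all $\ell\ge 1$, $m\ge 0$.
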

Based on Proposition \ref{pro1}, we can easily prove a conjecture on the positivity of certain spt-crank-type functions stated in the conclusion section of Garvan and Jennings-Shaffer \cite{MR3519478}. We note that their work was motivated by Andrews--Garvan--Liang's study \cite{MR2994105} of the two-variable spt-crank-type series for partitions:
$$S(z,q):=\sum_{n\ge 0}\sum_{m\in\mathbb{Z}}N_S(m,n)z^mq^n=\sum_{n\ge 1}\frac{(q^{n+1};q)_\infty  q^n}{(zq^n;q)_\infty(z^{-1}q^n;q)_\infty}.$$
In \cite[Theorem 5.1]{MR3519478}, they showed that $N_S(m,n)\ge 0$ for all integers $m$ and $n$.
Garvan and Jennings-Shaffer \cite[Equations (2.1)--(2.8)]{MR3519478} introduced eight two-variable spt-crank-type series $S_X(z,q)$, along with their corresponding spt-crank-type functions $M_X(m,n)$, defined by
$$
S_X(z,q) := \sum_{n \geq 0} \sum_{m \in \mathbb{Z}} M_X(m,n) z^m q^n.
$$
In the conclusion section, they point out that it is possible to interpret $M_X(m, n)$ as a statistic defined on the smallest parts that each spt-type function ${\rm spt}_X(n)$ counts. They note that six of these spt-crank-type functions $M_X(m,n)$ are nonnegative, except for the two spt-crank-type functions $M_{C_1}(m,n)$ and $M_{C_5}(m,n)$, which are defined by
$$S_{C_1}(z,q):=\sum_{n\ge 0}\sum_{m\in \mathbb{Z}} M_{C_1}(m,n) z^m q^n = \sum_{n\ge 1} \frac{q^n(q^{2n+1};q^2)_{\infty} (q^{n+1};q)_{\infty}}{(zq^n;q)_{\infty}(z^{-1}q^n;q)_{\infty}} $$
and
$$S_{C_5}(z,q):=\sum_{n\ge 0}\sum_{m\in \mathbb{Z}} M_{C_5}(m,n) z^m q^n = \sum_{n\ge 1} \frac{q^{\frac{n(n+1)}{2}}(q^{2n+1};q^2)_{\infty} (q^{n+1};q)_{\infty}}{(zq^n;q)_{\infty}(z^{-1}q^n;q)_{\infty}}. $$
They emphasized that numerical evidence suggests that both $M_{C_1}(m,n)$ and $M_{C_5}(m,n)$ are also nonnegative, and posed the problem of finding nice combinatorial interpretations for $M_{C_1}(m,n)$ and $M_{C_5}(m,n)$ that would prove their nonnegativity. In \cite{MR3745072}, Jang and Kim employed the circle method to prove that for any fixed integer $m$, both $M_{C_1}(m,n)$ and $M_{C_5}(m,n)$ are positive for all sufficiently large $n$. Recently, He and Liu \cite{hl2025} used the lattice point counting method to prove that both $M_{C_1}(m,n)$ and $M_{C_5}(m,n)$ are nonnegative for all integers $m$ and $n$.
\medskip

In what follows, we provide our proof of the nonnegativity of $M_{C_1}(m,n)$ and $M_{C_5}(m,n)$. By the first formula in \cite[Corollary 2.10]{MR3519478}, we have
$$M_{C_1}(m,n) = M_{C_5}(m,n) + N_{S}(m,n/2),$$
where we define $N_{S}(m,x):= 0$ for any $x \notin \mathbb{Z}$. Thus, it is clear that if $M_{C_5}(m,n) \geq 0$, then $M_{C_1}(m,n) \geq 0$, because $N_{S}(m,n/2) \geq 0$ for all $m,n$. Next, by \cite[Equation (2.2)]{MR3745072}, we have
\begin{align*}
\sum_{n\geq 0}M_{C_5}(m,n)q^n&=\frac{1}{(q^2;q^2)_{\infty}}\sum_{n\geq 1}(-1)^{n-1}\left(\frac{q^{\frac{n(n+1)}{2}+|m|n}}{1-q^n}-\frac{q^{n^2+n+2|m|n}}{1-q^{2n}}\right)\\
&=\frac{1}{(q^2;q^2)_{\infty}}\left(\sum_{n\ge 0}h_{|m|+1,1}(n)q^n-\sum_{n\ge 0}h_{|m|+1,1}(n)q^{2n}\right)\\
&=\frac{1}{(q^2;q^2)_{\infty}}\sum_{n\ge 0}h_{|m|+1,1}(2n+1)q^{2n+1}+\sum_{n\ge 0}\left(h_{|m|+1,1}(2n)-h_{|m|+1,1}(n)\right)q^{2n}.
\end{align*}
Therefore, by Proposition \ref{pro1}, we obtain the nonnegativity of $M_{C_5}(m,n)$. This completes the proof.

\section{The proofs}\label{sec2}
In this section we prove all the results stated in the Section \ref{sec1}. We prove Theorem \ref{main} and Proposition \ref{pro1} in Subsection \ref{subsec21} by using elementary series manipulations. In the remaining Subsection \ref{subsec22} we prove Theorem \ref{main0} and Theorem \ref{corm}, which will rely on the prime number theorem and the work \cite[Corollary 2]{MR2434882} of Ford on the distribution of integers with a divisor in a given interval.
\subsection{Proofs of Theorem \ref{main} and Proposition \ref{pro1}}\label{subsec21}
\begin{proof}[Proof of Theorem \ref{main}]We compute that
\begin{align}\label{eqh1}
\cH_{m,k}(q)=&\sum_{\ell\ge 1}(-1)^{\ell-1} \sum_{j\ge m}q^{\ell(j+k(\ell-1)/2)}\nonumber\\
=&\sum_{\substack{\ell\ge 1\\ \ell\equiv 1\pmod 2}}\sum_{j\ge m+k(\ell-1)/2}q^{j\ell}-\sum_{\ell\ge 1}\sum_{j\ge m}q^{\ell(2j+(2\ell-1)k)}\nonumber\\
=&\Big(\sum_{\ell\ge 1}-\sum_{\substack{\ell\ge 1\\ \ell\equiv 0\pmod 2}}\Big)\sum_{j\ge m+k(\ell-1)/2}q^{j\ell}-\sum_{\ell\ge 1}\sum_{\substack{j\ge 2m+k(2\ell-1)\\ j\equiv k\pmod 2}}q^{j\ell}.
\end{align}
Since $k\equiv 1\pmod 2$, we have
\begin{align}\label{eqh2}
\sum_{\ell\ge 1}\sum_{\substack{j\ge 2m+k(2\ell-1)\\ j\equiv k\pmod 2}}q^{j\ell}&=\sum_{\ell\ge 1}\sum_{j\ge 2m+k(2\ell-1)}q^{j\ell}-\sum_{\ell\ge 1}\sum_{\substack{j\ge 2m+k(2\ell-1)\\ j\equiv 0\pmod 2}}q^{j\ell}\nonumber\\
&=\sum_{\ell\ge 1}\sum_{j\ge 2m+k(2\ell-1)}q^{j\ell}-\sum_{\ell\ge 1}\sum_{j\ge m+k(2\ell-1)/2}q^{2j\ell}\nonumber\\
&=\sum_{\ell\ge 1}\sum_{j\ge 2m+k(2\ell-1)}q^{j\ell}-\sum_{\substack{\ell\ge 1\\ \ell\equiv 0\pmod 2}}\sum_{j\ge m+k(\ell-1)/2}q^{j\ell}.
\end{align}
Therefore, by substituting \eqref{eqh2} into \eqref{eqh1}, we obtain
\begin{align*}
\cH_{m,k}(q)&=\sum_{\ell\ge 1}\sum_{\substack{j\in\bZ\\ m+k(\ell-1)/2\le j<2m+k(2\ell-1)}}q^{j\ell}.
\end{align*}
Thus for any $n\in\bN$, we have
\begin{align*}
h_{m,k}(n)&=\#\{(j,\ell)\in\bN^2: j\ell=n,\; (m-k/2)+k\ell/2\le j< 2(m-k/2)+2k\ell\}\\
&=\#\{\ell\mid n:  (m-k/2)+k\ell/2\le j=n/\ell< 2(m-k/2)+2k\ell\}\\
&=\#\left\{\ell\mid n:  u_{m,k}(n)< 2k\ell\le 2u_{m,k}(n)\right\},
\end{align*}
where $u_{m,k}(n)=\sqrt{2kn+(m-k/2)^2}-(m-k/2)$. This completes the proof.
\end{proof}
To prove Proposition \ref{pro1}, we in fact prove the following more precise result on the difference between $h_{m,k}(2n)$ and $h_{m,k}(n)$, which is stated as Proposition \ref{prom}.
\begin{proposition}\label{prom}Let $m\ge 0$ be any integer and $k\ge 1$ any odd integer. Let $n\ge 1$ be any odd integer and $r\in\bN_0$. Then, we have
\begin{align*}
h_{m,k}(2^{r+1}n)-h_{m,k}(2^{r}n)
=&\#\left\{\ell\mid n: 2^{-1-r}u_{m,k}(2^{r+1}n) < 2k\ell\le 2^{-r}u_{m,k}(2^{r}n)\right\}\\
&+\#\left\{\ell\mid n: 2u_{m,k}(2^{r}n) < 2k\ell\le 2u_{m,k}(2^{r+1} n)\right\}.
\end{align*}
In particular, for all $n\in\bN$ we have
$$h_{m,k}(2n)\ge h_{m,k}(n).$$
\end{proposition}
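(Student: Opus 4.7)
The plan is to combine Theorem~\ref{main} with a dyadic partition of the divisor set of $2^r n$. Since $n$ is odd, every divisor of $2^r n$ admits a unique factorization $d = 2^s \ell$ with $0 \le s \le r$ and $\ell \mid n$. Writing $U_r := u_{m,k}(2^r n)$, Theorem~\ref{main} together with this decomposition gives
\begin{align*}
h_{m,k}(2^r n) = \sum_{s=0}^{r} \#\left\{\ell \mid n : 2^{-s} U_r < 2k\ell \le 2^{1-s} U_r\right\}.
\end{align*}
For each fixed $\ell \mid n$, the half-open intervals $(2^{-s} U_r, 2^{1-s} U_r]$ over $s \in \bN_0$ form a disjoint tiling of $(0, 2U_r]$, so restricting to $0 \le s \le r$ recovers exactly $(2^{-r} U_r, 2U_r]$. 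This collapses the double sum into
\begin{align*}
h_{m,k}(2^r n) = \#\left\{\ell \mid n : 2^{-r} U_r < 2k\ell \le 2U_r\right\},
\end{align*}
and the analogous formula holds with $r$ replaced by $r+1$.

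The heart of the argument is then the nesting
$$
\bigl(2^{-r} U_r,\, 2U_r\bigr] \;\subseteq\; \bigl(2^{-r-1} U_{r+1},\, 2U_{r+1}\bigr],
$$
which is equivalent to the two inequalities $U_r \le U_{r+1}$ and $U_{r+1} \le 2U_r$. The first is immediate from the explicit formula $U_r = \sqrt{2k \cdot 2^r n + c^2} - c$, where $c := m - k/2$, since $u_{m,k}$ is increasing in its argument. For the second, setting $A := 2k \cdot 2^r n$, the claim becomes $\sqrt{2A + c^2} \le 2\sqrt{A+c^2} - c$. The right-hand side is nonnegative because $2\sqrt{A+c^2} \ge 2|c| \ge c$, so squaring reduces the inequality to $2c\sqrt{A+c^2} \le A + 2c^2$; this is trivial if $c \le 0$, and when $c > 0$ a second squaring collapses it to $0 \le A^2$.

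Once the nesting is in place, set-theoretic subtraction yields
\begin{align*}
h_{m,k}(2^{r+1} n) - h_{m,k}(2^r n)
&= \#\bigl\{\ell \mid n : 2k\ell \in (2^{-r-1} U_{r+1},\, 2^{-r} U_r]\bigr\} \\
&\quad + \#\bigl\{\ell \mid n : 2k\ell \in (2U_r,\, 2U_{r+1}]\bigr\},
\end{align*}
which is precisely the identity claimed in Proposition~\ref{prom}. Both counts are nonnegative, giving $h_{m,k}(2^{r+1} n) \ge h_{m,k}(2^r n)$ for every odd $n$ and every $r \in \bN_0$; the unqualified statement $h_{m,k}(2n) \ge h_{m,k}(n)$ for arbitrary $n \in \bN$ then follows by writing $n = 2^r n'$ with $n'$ odd and applying the inequality to $n'$ and $r$. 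The only step that is not purely formal is the verification of $U_{r+1} \le 2U_r$, but as indicated this is a short squaring calculation and the anticipated main obstacle is simply bookkeeping the dyadic ranges correctly.
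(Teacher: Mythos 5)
Your proof is correct and follows essentially the same route as the paper: decompose the divisors of $2^r n$ as $2^s\ell$ with $\ell\mid n$ (using oddness of $n$), collapse the disjoint dyadic intervals into $(2^{-r}u_{m,k}(2^rn),\,2u_{m,k}(2^rn)]$, and split the difference via the nesting of consecutive intervals. The only difference is that you explicitly verify the key inequality $u_{m,k}(2^{r+1}n)\le 2u_{m,k}(2^{r}n)$ by squaring, which the paper merely asserts, so your write-up is if anything slightly more complete.
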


\begin{proof}
Let $m\ge 0$ be any integer and $k\ge 1$ be any odd integer. By Theorem \ref{main} and note that $n$ is an odd integer, we have
\begin{align*}
h_{m,k}(2^{r}n)=&\#\left\{\ell\mid 2^r n: u_{m,k}(2^rn)  < 2k\ell\le 2u_{m,k}(2^rn)\right\}\\
=&\sum_{0\le s\le r}\#\left\{\ell\mid n: u_{m,k}(2^{r}n) < 2k\cdot 2^s \ell\le 2u_{m,k}(2^{r} n)\right\}\\
=&\sum_{0\le s\le r}\#\left\{\ell\mid n: 2^{-s}u_{m,k}(2^{r}n) < 2k\ell\le 2^{1-s}u_{m,k}(2^{r} n)\right\}\\
=&\#\left\{\ell\mid n: 2^{-r}u_{m,k}(2^{r}n) < 2k\ell\le 2u_{m,k}(2^{r} n)\right\}.
\end{align*}
This implies
\begin{align*}
h_{m,k}(2^{r+1}n)-h_{m,k}(2^{r}n)=&\#\left\{\ell\mid n: 2^{-1-r}u_{m,k}(2^{r+1}n) < 2k\ell\le 2u_{m,k}(2^{r+1} n)\right\}\\
&-\#\left\{\ell\mid n: 2^{-r}u_{m,k}(2^{r}n) < 2k\ell\le 2u_{m,k}(2^{r} n)\right\}\\
=&\#\left\{\ell\mid n: 2^{-1-r}u_{m,k}(2^{r+1}n) < 2k\ell\le 2^{-r}u_{m,k}(2^{r}n)\right\}\\
&+\#\left\{\ell\mid n: 2u_{m,k}(2^{r}n) < 2k\ell\le 2u_{m,k}(2^{r+1} n)\right\},
\end{align*}
by note that
$$2u_{m,k}(2^{r+1} n)\ge 2u_{m,k}(2^{r} n)\ge 2^{-r}u_{m,k}(2^{r}n)\ge 2^{-1-r}u_{m,k}(2^{r+1}n),$$
for all $n, k\ge 1$ and $m, r\ge 0$. This completes the proof.
\end{proof}

\subsection{Proofs of Theorem \ref{main0} and Theorem \ref{corm}}\label{subsec22}
We begin with a normalization of $u_{m,k}(n)$ for the simplification of our discussions.
We write
$$u_{m,k}(n)=\sqrt{2kn}w_{m,k}(k^{-1}n),$$
where
$$w_{m,k}(n)=\sqrt{1+(8n)^{-1}(1-2m/k)^2}+(8n)^{-1/2}(1-2m/k).$$
Clearly, $\lim_{n\to+\infty}w_{m,k}(n)=1$. In particular, for $n\ge 100(1-2m/k)^2$, we have
\begin{align}\label{eq30}
|w_{m,k}(n)-1|<0.04.
\end{align}
By Theorem \ref{main} we have
\begin{align}\label{eqmmm}
h_{m,k}\left(n\right)&=\#\left\{\ell \mid n:  \sqrt{2kn}w_{m,k}(k^{-1}n)< 2k\ell\le 2\sqrt{2kn}w_{m,k}(k^{-1}n)\right\}\nonumber\\
&=\#\left\{\ell\mid n:  2^{-1/2}k^{-1/2}w_{m,k}(k^{-1}n)< n^{-1/2}\ell\le 2^{1/2}k^{-1/2}w_{m,k}(k^{-1}n)\right\}.
\end{align}
For all positive integers $n$ such that $\gcd(k,n)=1$, we further obtain
\begin{align}\label{eq31}
h_{m,k}\left(kn\right)&=\#\left\{\ell\mid kn:  2^{-1/2}w_{m,k}(n)< n^{-1/2}\ell\le 2^{1/2}w_{m,k}(n)\right\}\nonumber\\
&=\sum_{d\mid k}\#\left\{\ell\mid n:  2^{-1/2}d^{-1}w_{m,k}(n)< n^{-1/2}\ell\le 2^{1/2}d^{-1}w_{m,k}(n)\right\}.
\end{align}
To prove Theorem \ref{main0}, we first prove the following Lemmas \ref{lem22}-\ref{lem24}, which construct infinite subsets of $\{n \in \mathbb{N} : h_{m,k}(n) = \ell\}$ for each integer $\ell \ge 0$.
\begin{lemma}\label{lem22}Let $k$ be a positive odd integer and $m$ be any nonnegative integer. Then, for all prime $p\ge \max(3k^2, 10|1-2m/k|)$ we have $h_{m,k}(kp)=0$.
\end{lemma}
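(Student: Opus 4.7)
The strategy is to start from the normalized identity \eqref{eq31} with $n=p$ and exploit the fact that a prime has only two positive divisors. Since $p$ is prime and $p\ge 3k^2 \ge k$, we have $\gcd(k,p)=1$, so \eqref{eq31} applies and gives
\begin{equation*}
h_{m,k}(kp)=\sum_{d\mid k}\#\left\{\ell\in\{1,p\}: 2^{-1/2}d^{-1}w_{m,k}(p)<p^{-1/2}\ell\le 2^{1/2}d^{-1}w_{m,k}(p)\right\}.
\end{equation*}
The task then reduces to showing that for every divisor $d$ of $k$, neither $\ell=1$ nor $\ell=p$ lies in the indicated interval.

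I would first use the hypothesis $p\ge 10|1-2m/k|$ (together with \eqref{eq30}, possibly absorbing a small loss in the numerical constants) to confine $w_{m,k}(p)$ to a narrow window around $1$, say $w_{m,k}(p)\in (0.96,\,1.04)$. With that in place, the two candidate values are dispatched by direct comparison, using $d\le k$ throughout.

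For $\ell=1$: the quantity $p^{-1/2}\le (3k^2)^{-1/2}=1/(k\sqrt{3})$ must exceed the lower endpoint $2^{-1/2}d^{-1}w_{m,k}(p)\ge w_{m,k}(p)/(k\sqrt{2})$. However $1/(k\sqrt{3})<0.96/(k\sqrt{2})< w_{m,k}(p)/(k\sqrt{2})$, since $\sqrt{2/3}<0.96$; hence $\ell=1$ falls below the interval. For $\ell=p$: the quantity $p^{1/2}\ge k\sqrt{3}\ge\sqrt{3}$ must be at most the upper endpoint $2^{1/2}d^{-1}w_{m,k}(p)\le\sqrt{2}\,w_{m,k}(p)<1.04\sqrt{2}<\sqrt{3}$; hence $\ell=p$ lies above the interval. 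So every summand on the right of the displayed identity vanishes, and $h_{m,k}(kp)=0$.

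The argument is essentially arithmetic manipulation of the identity already proved; the only delicate point is bookkeeping with the normalization factor $w_{m,k}(p)$ and confirming the explicit numerical inequalities $\sqrt{2/3}<0.96$ and $1.04\sqrt{2}<\sqrt{3}$ fit cleanly inside the stated hypotheses $p\ge 3k^2$ and $p\ge 10|1-2m/k|$. That is the only spot where care is needed; once $w_{m,k}(p)$ is pinned sufficiently close to $1$, the rest is just observing that a prime has exactly the two divisors $1$ and $p$.
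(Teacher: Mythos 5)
Your proposal is correct and follows essentially the same route as the paper's own proof: reduce via \eqref{eq31} to the two divisors $\ell\in\{1,p\}$ of the prime $p$, pin $w_{m,k}(p)$ near $1$ using \eqref{eq30}, and rule out $\ell=1$ with $\sqrt{2/3}<0.96$ (from $p\ge 3k^2$) and $\ell=p$ with $1.04\sqrt{2}<\sqrt{3}$, which are precisely the paper's two displayed inequalities. The only differences are notational (the paper indexes the divisors as $p^{i-1/2}$, $i\in\{0,1\}$), so nothing further is needed.
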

\begin{proof}
Since prime $p\ge \max(3k^2, 10|1-2m/k|)$, we have $\gcd(k,p)=1$ and
\begin{align*}
h_{m,k}\left(k p \right)=\sum_{d\mid k}\#\left\{i\in\{0,1\}:  2^{-1/2}d^{-1}w_{m,k}(p)< p^{i-1/2}\le 2^{1/2}d^{-1}w_{m,k}(p)\right\},
\end{align*}
by using \eqref{eq31}. Moreover, by using \eqref{eq30},
$$2^{-1/2}d^{-1}w_{m,k}(p)\ge 2^{-1/2}k^{-1}(1-0.04)> p^{-1/2}\;\;\text{and}\;\; 2^{1/2}d^{-1}w_{m,k}(p)<2^{1/2}(1+0.04)<p^{1/2},$$
holds for all $d\mid k$. Thus for any $d\mid k$, there does not exist $i\in\{0,1\}$ such that the inequalities
$$2^{-1/2}d^{-1}w_{m,k}(p)< p^{i-1/2}\le 2^{1/2}d^{-1}w_{m,k}(p)$$
hold, that is, $h_{m,k}\left(kp\right)=0$, which completes the proof.
\end{proof}

\begin{lemma}\label{lem23} Let $k$ be a positive odd integer and $m$ be any nonnegative integer. Then, for all prime $p\ge \max(2k, 10|1-2m/k|)$ we have $h_{m,k}(kp^2)=1$.
\end{lemma}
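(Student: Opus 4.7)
The plan is to mimic the approach used for Lemma \ref{lem22}, applying the decomposition \eqref{eq31} and then checking case by case which divisors $\ell$ of $p^2$ fall into each of the intervals indexed by $d\mid k$.

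First I would observe that, since $p\ge 2k$ gives $\gcd(k,p^2)=1$, the identity \eqref{eq31} applies and yields
\begin{align*}
h_{m,k}(kp^2)=\sum_{d\mid k}\#\bigl\{\ell\mid p^2:\ 2^{-1/2}d^{-1}w_{m,k}(p^2)<p^{-1}\ell\le 2^{1/2}d^{-1}w_{m,k}(p^2)\bigr\}.
\end{align*}
Since $p\ge 10|1-2m/k|$ implies $p^2\ge 100(1-2m/k)^2$, estimate \eqref{eq30} gives $|w_{m,k}(p^2)-1|<0.04$, and the divisors of $p^2$ produce exactly three candidates $p^{-1}\ell\in\{p^{-1},1,p\}$.

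Next, I would dispose of the contribution $d=1$. The interval is $(2^{-1/2}w_{m,k}(p^2),\ 2^{1/2}w_{m,k}(p^2)]$, which, with $|w_{m,k}(p^2)-1|<0.04$, lies inside $(0.69,1.48]$. Of the three candidates $p^{-1},1,p$, only $1$ is in this interval for $p\ge 2$, so $d=1$ contributes exactly $1$ (corresponding to $\ell=p$).

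It remains to show every $d\mid k$ with $d\ge 3$ (recall $k$ is odd) contributes $0$. For such $d$ the interval is $(2^{-1/2}d^{-1}w_{m,k}(p^2),\ 2^{1/2}d^{-1}w_{m,k}(p^2)]\subset (0, 2^{1/2}\cdot 1.04/3]\subset (0, 1/2]$, so neither $1$ nor $p$ can belong to it; meanwhile $p^{-1}$ is excluded because the lower endpoint $2^{-1/2}d^{-1}w_{m,k}(p^2)\ge 2^{-1/2}k^{-1}\cdot 0.96$ exceeds $p^{-1}$ exactly when $p>2^{1/2}k/0.96$, which is guaranteed by $p\ge 2k$. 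Summing, $h_{m,k}(kp^2)=1$.

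The work is entirely elementary numerical bookkeeping; the only subtlety is making sure the two hypotheses $p\ge 2k$ and $p\ge 10|1-2m/k|$ are each used at the right place — the latter to invoke \eqref{eq30}, and the former to rule out $\ell=1$ when $d\ge 3$ and simultaneously to ensure $p$ is large enough that $p\notin(0.69,1.48]$. I do not anticipate a genuinely hard step; the proof is a direct analogue of Lemma \ref{lem22} with one additional divisor to consider.
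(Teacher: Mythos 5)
Your proposal is correct and follows essentially the same route as the paper's proof: both apply \eqref{eq31} to reduce to the three candidates $p^{-1}\ell\in\{p^{-1},1,p\}$, use \eqref{eq30} to show the $d=1$ interval captures only the candidate $1$ (i.e.\ $\ell=p$), and use $p\ge 2k$ together with $d\ge 3$ to show every other divisor $d\mid k$ contributes nothing. The only blemish is your stated containment in $(0.69,1.48]$ for the $d=1$ interval — its lower endpoint can be as small as $2^{-1/2}\cdot 0.96\approx 0.679$ — but this is harmless since the candidates you exclude satisfy $p^{-1}\le 1/2$ and $p\ge 2$.
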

\begin{proof}
Since prime $p\ge \max(2k, 10|1-2m/k|)$, we have $\gcd(k,p)=1$ and
\begin{align*}
h_{m,k}\left(k p^2 \right)=\sum_{d\mid k}\#\left\{i\in\{0,1,2\}:  2^{-1/2}d^{-1}w_{m,k}(p^2)< p^{i-1}\le 2^{1/2}d_0^{-1}w_{m,k}(p^2)\right\},
\end{align*}
by using \eqref{eq31}. Moreover, by using \eqref{eq30},
$$2^{-1/2}d^{-1}w_{m,k}(p^2)\ge 2^{-1/2}k^{-1}(1-0.04)> p^{-1}\;\;\text{and}\;\; 2^{1/2}d^{-1}w_{m,k}(p^2)<\frac{2}{3}(1-0.04)<p^0,$$
holds for all $d\ge 3$ with $d\mid k$. Thus for all prime $p\ge \max(2k, 10|1-2m/k|)$,
\begin{align*}
h_{m,k}\left(kp^2\right)&=\sum_{d\in \{1\}}\#\left\{i\in\{0,1,2\}:  2^{-1/2}d^{-1}w_{m,k}(p^2)< p^{i-1}\le 2^{1/2}d^{-1}w_{m,k}(p^2)\right\}\\
&=\#\left\{i\in\{1\}:  2^{-1/2}w_{m,k}(p^2)< p^{i-1}\le 2^{1/2}w_{m,k}(p^2)\right\}=1,
\end{align*}
which completes the proof.
\end{proof}

\begin{lemma}\label{lem24} Let $t\ge 1, m\ge 0$ be any integers and $k\ge 1$ be an odd integer. Then, for all primes $q>p\ge \max(2k, 10|1-2m/k|)$ with $(q/p)^{t/2}\le 0.96\sqrt{2}\approx1.357645$, we have $h_{m,k}(kp^tq^t)=1+t$.
\end{lemma}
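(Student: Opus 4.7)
The plan is to parallel the arguments of Lemmas \ref{lem22} and \ref{lem23}: apply the divisor-sum identity \eqref{eq31} to $n = p^t q^t$, which is valid because $p, q > k$ gives $\gcd(k, p^t q^t) = 1$. This expresses
\[
h_{m,k}(kp^tq^t) = \sum_{d \mid k} N_d,
\]
where $N_d$ counts divisors $\ell = p^a q^b$ of $p^t q^t$ (with $0 \le a, b \le t$) whose normalization $p^{a - t/2} q^{b - t/2}$ lies in the interval $I_d := (2^{-1/2} d^{-1} w,\, 2^{1/2} d^{-1} w]$, and $w := w_{m,k}(p^t q^t)$ satisfies $|w - 1| < 0.04$ by \eqref{eq30} since $p^t q^t \ge pq \ge 100(1 - 2m/k)^2$. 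The target is to show $N_1 = t + 1$ and $N_d = 0$ for every $d \mid k$ with $d \ge 3$ (note $k$ odd makes each divisor odd, hence $\ge 3$ when $> 1$).

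I would partition the candidates by $s := a + b$. On the main diagonal $s = t$, the $t + 1$ normalized values $\{(q/p)^{t/2 - a} : 0 \le a \le t\}$ are bracketed between $(p/q)^{t/2}$ and $(q/p)^{t/2}$. The hypothesis $(q/p)^{t/2} \le 0.96\sqrt{2}$ together with $w > 0.96$ gives $(q/p)^{t/2} \le w\sqrt{2}$, while $1.04 \cdot 0.96 = 0.9984 < 1$ yields $(p/q)^{t/2} \ge 1/(0.96\sqrt{2}) > w/\sqrt{2}$. Hence the entire diagonal lies in $I_1$, contributing $t + 1$ to $N_1$. Moreover the smallest diagonal value is at least $1/(0.96\sqrt{2}) \approx 0.737$, whereas $\sup I_d = w\sqrt{2}/d \le 1.04\sqrt{2}/3 \approx 0.490$ for $d \ge 3$, so the diagonal contributes nothing to such $N_d$.

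For off-diagonal rows $s \ne t$, the extreme normalized values on a row differ multiplicatively by $q^{|s - t|}$ from the diagonal endpoints. For $s > t$ the minimum is at least $q/(0.96\sqrt{2})$; since $q > p \ge 2k$ forces $q \ge 2k + 1 \ge 3$, this strictly exceeds $w\sqrt{2}$, which is an upper bound for every $I_d$, so the whole row misses all $I_d$. For $s < t$ the maximum is at most $0.96\sqrt{2}/q$, and I would want $0.96\sqrt{2}/q \le w/(d\sqrt{2})$, i.e.\ $q \ge 1.92\,d / w$; since $w > 0.96$ this reduces to $q \ge 2d$, which holds because $q > p \ge 2k \ge 2d$. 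Summing the contributions gives $h_{m,k}(kp^t q^t) = (t + 1) + 0 = t + 1$ as claimed.

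The main technical obstacle is the tight interplay of the three small-slack constants in the hypothesis: $0.96$ in the ratio bound, $0.04$ in $|w - 1|$, and the factor $2$ in $p \ge 2k$. Each required inequality passes with only a narrow margin (notably $1.04 \cdot 0.96 = 0.9984 < 1$ and $1.92\,d / w < 2d < q$), so the lemma is essentially sharp at these constants, and the verification demands careful bookkeeping to confirm that the strict bounds $0.96 < w < 1.04$ (guaranteed by $p, q \ge 10|1 - 2m/k|$ via \eqref{eq30}) and $q \ge 2d + 1$ are never saturated.
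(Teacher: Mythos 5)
Your proof is correct and follows essentially the same route as the paper's: both apply \eqref{eq31} to reduce to counting lattice points $(a,b)\in\{0,\ldots,t\}^2$ for each divisor $d\mid k$, then use the constants $(q/p)^{t/2}\le 0.96\sqrt{2}$, $|w-1|<0.04$ from \eqref{eq30}, and $p\ge 2k$ to show that only the diagonal $a+b=t$ contributes, and only for $d=1$. The paper organizes the count by fixing the exponent of $q$ and bracketing powers of $p$ between consecutive integer powers, whereas you partition by the total exponent $a+b$ and compare row extremes against the intervals $I_d$ --- a purely cosmetic reorganization of the same argument.
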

\begin{proof}
Since $q>p\ge \max(2k, 10|1-2m/k|)$ are primes and $q/p=:\lambda$, we have $\gcd(k, p^tq^t)=1$. Thus, the use of \eqref{eq31} yields
\begin{align*}
h_{m,k}\left(kp^tq^t\right)
&=\sum_{d\mid k}\#\left\{\ell\mid p^tq^t: 2^{-1/2}d^{-1}w_{m,k}(p^tq^t)< p^{-t/2}q^{-t/2}\ell\le 2^{1/2}d^{-1}w_{m,k}(p^tq^t)\right\}\\
&=\sum_{d\mid k}\#\left\{(i,j)\in\{0,1,\ldots,t\}^2: \frac{2^{-1/2}}{d}w_{m,k}(p^tq^t)< p^{i-t/2}q^{j-t/2}\le \frac{2^{1/2}}{d}w_{m,k}(p^tq^t)\right\}\\
&=\sum_{d\mid k}\#\left\{(i,j)\in\{0,1,\ldots,t\}^2: \frac{2^{-1/2}}{\lambda^{j-t/2}d}w_{m,k}(p^tq^t)< p^{i+j-t}\le \frac{2^{1/2}}{\lambda^{j-t/2}d}w_{m,k}(p^tq^t)\right\}.
\end{align*}
Moreover, by using \eqref{eq30},
$$\frac{2^{-1/2}}{\lambda^{j-t/2}d}w_{m,k}(p^tq^t)>\frac{2^{-1/2}}{\lambda^{t/2}k}(1-0.04)\ge\frac{2^{-1/2}}{0.96\sqrt{2}k}\cdot 0.96 =(2k)^{-1}\ge p^{-1},$$
and
$$\frac{2^{1/2}}{\lambda^{j-t/2}d}w_{m,k}(p^tq^t)<\frac{1}{3}\cdot 2^{1/2}(1+0.04)\lambda^{t/2}\le \frac{1}{3}\cdot 2^{1/2}(1+0.04)\cdot 2^{1/2}(1-0.04)<1=p^0,$$
holds for all $0\le j\le t$ and $d\ge 3$ with $d\mid k$. Thus
\begin{align*}
h_{m,k}\left(kp^tq^t\right)&=\#\left\{(i,j)\in\{0,1,\ldots,t\}^2: \frac{2^{-1/2}}{\lambda^{j-t/2}}w_{m,k}(p^tq^t)< p^{i+j-t}\le \frac{2^{1/2}}{\lambda^{j-t/2}}w_{m,k}(p^tq^t)\right\}\\
&=\#\left\{(i,j)\in\{0,1,\ldots,t\}^2: i+j-t=0\right\}=1+t,
\end{align*}
which completes the proof.
\end{proof}

\begin{proof}[Proof of Theorem \ref{main0}]Recall that the prime number theorem state that
$$\sum_{p\le x,\; p~is~prime }1=\frac{x}{\log x}\left(1+O\left((\log x)^{-1}\right)\right),$$
as $x\to\infty$. This immediately yields  
$$\sum_{\alpha x<p\le x,\; p~is~prime }1=(1-\alpha)\frac{x}{\log x}\left(1+O\left((\log x)^{-1}\right)\right),$$
for any given $\alpha\in (0,1)$, as $x\to\infty$.
Using Lemmas \ref{lem22} and \ref{lem23}, we have
$$S_{m,k}(0; x)=\sum_{n\le x}{\bf 1}_{h_{m,k}(n)=0}\gg \sum_{x/(2k)< p\le x/k}{\bf 1}_{h_{m,k}(kp)=0}\gg \sum_{x/(2k)<p\le x/k,\; p~is~prime}1\gg \frac{x}{\log x},$$
and
$$S_{m,k}(1; x)=\sum_{n\le x}{\bf 1}_{h_{m,k}(n)=1}\gg \sum_{x/(2k)< p^2\le x/k}{\bf 1}_{h_{m,k}(kp^2)=1}\gg \sum_{(x/(2k))^{1/2}< p\le (x/k)^{1/2},\; p~is~prime}1\gg \frac{\sqrt{x}}{\log x},$$
as $x\to\infty$, where the implied constant depends only on $m$ and $k$. For $\ell=t+1$, using Lemma \ref{lem24} and prime number theorem, we obtain
\begin{align*}
S_{m,k}(\ell; x)=\sum_{n\le x}{\bf 1}_{h_{m,k}(n)=1+t}&\gg \sum_{\substack{p^tq^t\le x/k \\1<(q/p)^{t/2}\le 1.3\\ q>p\ge \max(2k, 10|1-2m/k|)~are~primes}}{\bf 1}_{h_{m,k}(kp^tq^t)=1+t}\\
&\gg  \sum_{\substack{p~is~prime \\ \frac{1}{2}(x/k)^{\frac{1}{2t}}<p\le (x/k)^{\frac{1}{2t}}}}\#\{q~is~prime: p<q\le 1.3^{2/t}p\},
\end{align*}
that is
\begin{align*}
S_{m,k}(\ell; x)&\gg (1.3^{2/t}-1)\sum_{\frac{1}{2}(x/k)^{\frac{1}{2t}}<p\le (x/k)^{\frac{1}{2t}},\; p~is~prime}\frac{p}{\log p}\\
&\gg  \frac{x^{\frac{1}{2t}}}{\log x}\sum_{\frac{1}{2}(x/k)^{\frac{1}{2t}}<p\le (x/k)^{\frac{1}{2t}},\; p~is~prime}1\gg \frac{x^{{1}/{t}}}{(\log x)^2},
\end{align*}
where the implied constant depends only on $\ell$, $m$ and $k$. This completes the proofs.
\end{proof}

We finally give the proof for Theorem \ref{corm}.
\begin{proof}[Proof of Theorem \ref{corm}]
For all $x\ge 200k(1-2m/k)^2$ and all $x/2< n\le x$, by \eqref{eqmmm} we have
\begin{align}\label{eqff}
h_{m,k}\left(n\right)&=\#\left\{\ell \mid n:  \frac{1}{\sqrt{2k}}n^{1/2}w_{m,k}(k^{-1}n)< \ell\le \frac{2}{\sqrt{2k}}n^{1/2}w_{m,k}(k^{-1}n)\right\}\nonumber\\
&\le \#\left\{\ell \mid n:  \frac{1}{\sqrt{2k}}(x/2)^{1/2}(1-0.04)< \ell\le \frac{2}{\sqrt{2k}}x^{1/2}(1+0.04)\right\}\nonumber\\
&\le \#\left\{\ell \mid n:  3^{-1}k^{-1/2}x^{1/2}< \ell\le 2k^{-1/2}x^{1/2}\right\}.
\end{align}
Let $H(x,y,z)$ be the number of positive integers not exceeding $x$ which have a divisor in the interval $(y,z]$. Recall that Ford \cite[Corollary 2]{MR2434882} states: If $c>1$ and $1/(c-1)\le y\le x/c$, then
$$H(x,y,cy)\asymp \frac{x}{(\log Y)^{\delta}(\log\log Y)^{3/2}}\;\;\; (Y=\min(y, x/y)+3),$$
where $\delta=1-\frac{1+\log\log 2}{\log 2}=0.086071\cdots$.  By combining this with \eqref{eqff}, we obtain
\begin{align*}
\sum_{x/2< n\le x}{\bf 1}_{h_{m,k}\left(n\right)>0}&\le \sum_{n\le x}{\bf 1}_{\#\left\{\ell \mid n:  3^{-1}k^{-1/2}x^{1/2}< \ell\le 2k^{-1/2}x^{1/2}\right\}>0}\\
&\le H\left(x, 3^{-1}k^{-1/2}x^{1/2},  2k^{-1/2}x^{1/2}\right) \asymp \frac{x}{(\log x)^{\delta}(\log\log x)^{3/2}},
\end{align*}
for all $x\ge 200k(1-2m/k)^2$, where the implied constant depends only on $k$. Thus, for all $x>0$ such that $\sqrt{x}\ge 200k(1-2m/k)^2$,
we have
\begin{align*}
\sum_{n\le x}{\bf 1}_{h_{m,k}\left(n\right)>0}&=\sum_{n\le \sqrt{x}}{\bf 1}_{h_{m,k}\left(n\right)>0}+\sum_{1\le r\le (\log 4)^{-1}(\log x)}\;\sum_{2^{-r}x< n\le 2^{1-r}x}{\bf 1}_{h_{m,k}\left(n\right)>0}\\
&\ll \sqrt{x}+\sum_{1\le r\le (\log 2)^{-1}(\log x)} \frac{2^{1-r}x}{(\log x)^{\delta}(\log\log x)^{3/2}}\ll \frac{x}{(\log x)^{\delta}(\log\log x)^{3/2}}.
\end{align*}
This completes the proofs.
\end{proof}
%\bibliographystyle{plain}
%\bibliography{test}

\end{document}